\documentclass[11 pt]{amsart}
\pagestyle{plain}
\usepackage[english]{babel}
\usepackage{amsthm}
\usepackage{graphicx}
\usepackage{amsfonts}
\usepackage{eufrak}
\usepackage{amsopn}
\usepackage{amsmath, amsfonts, amssymb, amscd, amsthm}
\usepackage{yhmath}
\usepackage{setspace}
\usepackage{mathtools}
\usepackage[all]{xy}
\newtheorem{theorem}{Theorem}[section]

\newtheorem{lemma}[theorem]{Lemma}
\newtheorem{proposition}[theorem]{Proposition}
\newtheorem{corollary}[theorem]{Corollary}
\theoremstyle{definition}

\theoremstyle{remark}
\newtheorem{remark}[theorem]{Remark}
\theoremstyle{example}

\theoremstyle{note}

\numberwithin{equation}{section}

\DeclareMathOperator{\Sp}{Sp}

\DeclareMathOperator{\Mod}{Mod}

\begin{document}
\title{Finite order elements in the integral symplectic group}
\author{Kumar Balasubramanian} \thanks{Research of Kumar Balasubramanian was supported by DST-SERB Grant: YSS/2014/000806.}

\address{Kumar Balasubramanian\\
Department of Mathematics\\
IISER Bhopal\\
Bhopal, Madhya Pradesh 462066, India}
\email{bkumar@iiserb.ac.in}

\author{M. Ram Murty} \thanks{Research of M. Ram Murty was partially supported by an NSERC Discovery grant.}
\address{M. Ram Murty\\
Department of Mathematics and Statistics\\
Queen's University\\
Kingston, Ontario K7L 3N6, Canada}
\email{murty@mast.queensu.ca}

\author{Karam Deo Shankhadhar}
\address{Karam Deo Shankhadhar\\
Department of Mathematics\\
IISER Bhopal\\
Bhopal, Madhya Pradesh 462066, India}
\email{karamdeo@iiserb.ac.in}

\thispagestyle{empty}

\maketitle
\vspace{0.3 cm}

\section*{Abstract}
For $g\in \mathbb{N}$, let $G=\Sp(2g,\mathbb{Z})$ be the integral symplectic group and $S(g)$ be the set of all positive integers which can occur as the order of an element in $G$. In this paper, we show that $S(g)$ is a bounded subset of $\mathbb{R}$ for all positive integers $g$. We also study the growth of the functions $f(g)=|S(g)|$, and $h(g)=max\{m\in \mathbb{N}\mid m\in S(g)\}$ and show that they have at least exponential growth.

\section{Introduction}

Given a group $G$ and a positive integer $m\in \mathbb{N}$, it is natural to ask if there exists $k\neq 1\in G$ such that $o(k)=m$, where $o(k)$ denotes the order of the element $k\in G$. In this paper, we make some observations about the collection of positive integers which can occur as orders of elements in $G=\Sp(2g, \mathbb{Z})$. Before we proceed further we set up some notation and briefly mention the problems studied in this paper. \\

Let $G=\Sp(2g,\mathbb{Z})$ be the group of all $2g \times 2g$ matrices with integral entries satisfying
\[A^{\top}JA=J\]
where $A^{\top}$ is the transpose of the matrix $A$ and $J=\begin{pmatrix}0_{g} & I_{g} \\ -I_{g} & 0_{g}\end{pmatrix}$.\\

Throughout we write $m=p_{1}^{\alpha_{1}}\dots p_{k}^{\alpha_{k}}$, where $p_{i}$ is a prime and $\alpha_{i}>0$ for all $i\in \{1,2, \ldots, k\}$. We also assume that the primes $p_{i}$ are such that $p_{i}<p_{i+1}$ for $1\leq i < k$. We write $\pi(x)$ for the number of primes less than or equal to $x$. Also for $A\in G$ we let $o(A)$ denote the order of $A$. We let $\phi$ denote the Euler's phi function. It is a well known fact that the function $\phi$ is multiplicative, i.e., $\phi(mn)=\phi(m)\phi(n)$ if $m,n$ are relatively prime and satisfies $\phi(p^{\alpha})= p^{\alpha}(1-\frac{1}{p})$ for all primes $p$ and positive integer $\alpha\in \mathbb{N}$ (see \cite{Burton} for a proof). Let \[S(g)=\{m\in \mathbb{N}\mid \exists\, A\neq 1 \in G \textrm{ with } o(A)=m\}.\]
In this paper we show that $S(g)$ is always a bounded subset of $\mathbb{R}$ for all positive integers $g$. Once we know that $S(g)$ is a bounded set, it makes sense to consider the functions $f(g)=|S(g)|$, where $|S(g)|$ is the cardinality of $S(g)$ and $\displaystyle h(g)= max \{m\mid m\in S(g)\}$, i.e., $h(g)$ is the maximal possible (finite) order in $G=\Sp(2g,\mathbb{Z})$. We show that the functions $f$ and $h$ have at least exponential growth.\\

The above problem derives its motivation from analogous problems from the theory of mapping class groups of a surface of genus $g$. We know that given a surface $S_{g}$ of genus $g$, there is a surjective homomorphism $\psi: \Mod(S_{g})\rightarrow \Sp(2g,\mathbb{Z})$, where $\Mod(S_{g})$ is the mapping class group of $S_{g}$. It is a well known fact that for $f\in \Mod(S_{g})$ ($f\neq 1$) of finite order, we have $\psi(f)\neq 1$. Let $\tilde{S}(g)=\{m\in \mathbb{N}\mid \exists f\neq 1\in \Mod(S_{g}) \textrm{ with } o(f)=m\}$. The set $\tilde{S}(g)$ is a finite set and it makes sense to consider the functions $\tilde{f}(g)=|\tilde{S}(g)|$ and $\tilde{h}(g)=max\{m\in \mathbb{N}\mid m\in \tilde{S}(g)\}$. It is a well known fact that both these functions $\tilde{f}$ and $\tilde{h}$ are bounded above by $4g+2$. We refer the reader to \cite{FarMar} for an excellent introduction to the mapping class group and the proofs of some of these facts.

\section{Some results we need}
In this section we mention a few results that we need in order to prove the main results in this paper.

\begin{proposition}[B\"{u}rgisser]\label{burgisser} Let $\displaystyle m= p_{1}^{\alpha_{1}}\dots p_{k}^{\alpha_{k}}$, where the primes $p_{i}$ satisfy $p_{i}<p_{i+1}$ for $1\leq i < k$ and where $\alpha_{i}\geq 1$ for $1\leq i \leq k$. There exists a matrix $A\in \Sp(2g, \mathbb{Z})$ of order $m$ if and only if\\

\begin{enumerate}
\item[a)] $\displaystyle \sum_{i=2}^{k}\phi(p_{i}^{\alpha_{i}})\leq 2g$, if $m\equiv 2(\bmod\, 4)$.
\item[b)] $\displaystyle \sum_{i=1}^{k}\phi(p_{i}^{\alpha_{i}})\leq 2g$, if $m\not \equiv 2(\bmod\, 4)$.
\end{enumerate}
\end{proposition}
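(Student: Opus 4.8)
The plan is to split the biconditional into an elementary "only if" reduction to a combinatorial fact about $\phi$, and an "if" direction built on an explicit lattice construction. Both directions rest on the same starting observation: a finite order $A\in\Sp(2g,\mathbb{Z})$ is diagonalizable over $\overline{\mathbb{Q}}$ with all eigenvalues roots of unity (its minimal polynomial divides the separable polynomial $x^{o(A)}-1$), so that its characteristic polynomial factors as $\chi_A(x)=\prod_d \Phi_d(x)^{e_d}$ with $e_d\ge 0$, $\sum_d e_d\,\phi(d)=2g$, and $o(A)=\operatorname{lcm}\{d:e_d>0\}$. The symplectic condition additionally forces the eigenvalue multiplicities to be invariant under $\lambda\mapsto\lambda^{-1}$ and forces $e_1,e_2$ to be even (the form restricts nondegenerately to the $\pm1$-eigenspaces); this last point is only needed to pin down the tiny cases $m\le 2$, where it is in any case subsumed by $g\ge 1$.

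For the "only if" direction I would prove the combinatorial lemma that $\min\{\sum_{d\in D}\phi(d): D\subseteq\mathbb{N},\ \operatorname{lcm}(D)=m\}$ equals $\sum_{i=1}^{k}\phi(p_i^{\alpha_i})$ when $m\not\equiv 2\pmod 4$ and $\sum_{i=2}^{k}\phi(p_i^{\alpha_i})$ when $m\equiv 2\pmod 4$. For the lower bound, note that in either case every prime power $p_i^{\alpha_i}\,\|\,m$ appearing in the target sum has $\phi(p_i^{\alpha_i})\ge 2$; assign to each such $i$ some $d\in D$ divisible by $p_i^{\alpha_i}$, group the indices by the chosen $d$, and use $\phi(d)\ge \phi\big(\prod_{i\in I_d}p_i^{\alpha_i}\big)=\prod_{i\in I_d}\phi(p_i^{\alpha_i})\ge\sum_{i\in I_d}\phi(p_i^{\alpha_i})$, the last inequality because a product of integers each $\ge 2$ dominates their sum. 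For the upper bound exhibit $D=\{p_1^{\alpha_1},\dots,p_k^{\alpha_k}\}$ in the first case and $D=\{2p_2^{\alpha_2},p_3^{\alpha_3},\dots,p_k^{\alpha_k}\}$ in the second, using $\phi(2p_2^{\alpha_2})=\phi(p_2^{\alpha_2})$ and $\operatorname{lcm}=m$. Since $2g=\sum_d e_d\phi(d)\ge\sum_{d:e_d>0}\phi(d)$, inequalities (a) and (b) follow.

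For the "if" direction I would build $A$ as a block diagonal matrix. The key input is that for every $d\ge 3$ there is $M_d\in\Sp(\phi(d),\mathbb{Z})$ of order $d$ with $\chi_{M_d}=\Phi_d$: take the rank-$\phi(d)$ lattice $\mathbb{Z}[\zeta_d]$ with the order-$d$ automorphism "multiplication by $\zeta_d$", and equip it with $\langle x,y\rangle=\operatorname{Tr}_{\mathbb{Q}(\zeta_d)/\mathbb{Q}}(\delta\,x\,\bar y)$, where $\bar{\phantom{x}}$ is complex conjugation and $\delta$ is a purely imaginary ($\bar\delta=-\delta$) generator of the inverse different $\mathfrak{d}^{-1}$ of $\mathbb{Q}(\zeta_d)$; since $\mathbb{Z}[\zeta_d]$ is the full ring of integers, $\mathfrak{d}=(\Phi_d'(\zeta_d))$ is principal, and the palindromic identity $\Phi_d(x)=x^{\phi(d)}\Phi_d(1/x)$ shows $\overline{\Phi_d'(\zeta_d)}/\Phi_d'(\zeta_d)$ is a root of unity, so a root-of-unity twist of $\Phi_d'(\zeta_d)^{-1}$ furnishes such a $\delta$; the resulting form is alternating (the trace of a purely imaginary element vanishes), $\zeta_d$-invariant, and unimodular. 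Given (a) or (b), set $A_0=\operatorname{diag}(M_{p_1^{\alpha_1}},\dots,M_{p_k^{\alpha_k}})$ in case (b) and $A_0=\operatorname{diag}(-M_{p_2^{\alpha_2}},M_{p_3^{\alpha_3}},\dots,M_{p_k^{\alpha_k}})$ in case (a) (here $-M_q$ has order $2q$ for odd $q$, as $-I$ is central in $\Sp$), of size $2g_0\le 2g$; then $A=\operatorname{diag}(A_0,I_{2g-2g_0})$ has order $m$ and preserves a unimodular alternating form on $\mathbb{Z}^{2g}$, so after a $\GL(2g,\mathbb{Z})$-conjugation—every unimodular alternating form over $\mathbb{Z}$ admits a symplectic basis—it lies in $\Sp(2g,\mathbb{Z})$. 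The edge cases $m=1,2$ are handled by $I_{2g}$ and $\operatorname{diag}(-I_2,I_{2g-2})$.

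The main obstacle is the symplectic realization of the cyclotomic blocks over $\mathbb{Z}$: antisymmetry and $\zeta_d$-invariance hold for any purely imaginary $\delta$, but getting a \emph{unimodular} form forces $\delta$ to generate precisely $\mathfrak{d}^{-1}$, and reconciling "purely imaginary" with "generates the inverse different"—via monogenicity of $\mathbb{Z}[\zeta_d]$ and the computation that $\overline{\Phi_d'(\zeta_d)}/\Phi_d'(\zeta_d)$ is a root of unity—is the technical heart. Everything else is bookkeeping with $\phi$ and with orthogonal direct sums of symplectic lattices.
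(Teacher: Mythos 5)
The paper offers no proof of this proposition at all: it is quoted verbatim from B\"urgisser and the ``proof'' is the citation to Corollary~2 of \cite{Bur}. Your argument is therefore doing real work that the paper delegates, and it is correct; moreover it follows what is essentially the classical route (and, as far as I can tell, the same one B\"urgisser uses). The necessity half is sound: the characteristic polynomial of a finite-order integral matrix is $\prod_d\Phi_d(x)^{e_d}$ with $\sum_d e_d\phi(d)=2g$ and $\mathrm{lcm}\{d:e_d>0\}=m$, and your combinatorial minimization of $\sum_{d\in D}\phi(d)$ over $D$ with $\mathrm{lcm}(D)=m$ is exactly right --- the inequality $\prod a_i\ge\sum a_i$ for integers $a_i\ge 2$ is the correct engine, it applies because every term in the target sum is $\ge 2$ in both cases (this is precisely where $m\equiv 2\pmod 4$ versus $m\not\equiv 2\pmod 4$ bifurcates), and the upper-bound witness $\{2p_2^{\alpha_2},p_3^{\alpha_3},\dots\}$ with $\phi(2p_2^{\alpha_2})=\phi(p_2^{\alpha_2})$ explains why the factor $2$ is free exactly when $\alpha_1=1$. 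The sufficiency half via the form $\langle x,y\rangle=\mathrm{Tr}(\delta x\bar y)$ on $\mathbb{Z}[\zeta_d]$ is also correct, and you have rightly isolated the one genuinely technical point: the existence of a purely imaginary generator $\delta$ of $\mathfrak{d}^{-1}$. If you write this up, spell out that step: one computes $\overline{\Phi_d'(\zeta_d)}=-\zeta_d^{\,2-\phi(d)}\Phi_d'(\zeta_d)$ from the palindromic identity, so $\delta=u\,\Phi_d'(\zeta_d)^{-1}$ is purely imaginary iff $u/\bar u=\zeta_d^{\,\phi(d)-2}$, which is solved by $u=\zeta_d^{(\phi(d)-2)/2}$ since $\phi(d)$ is even for $d\ge 3$. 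The remaining ingredients (unimodularity from the duality $\mathfrak{d}^{-1}\times\mathcal{O}\to\mathbb{Z}$, the order $2q$ of $-M_q$ for odd $q$, evenness of each $\phi(p_i^{\alpha_i})$ appearing so the padding block has even rank, and the existence of a symplectic basis for any unimodular alternating form over $\mathbb{Z}$) all check out. In short: a valid, self-contained proof where the paper gives only a reference.
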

\vspace{0.2 cm}

\begin{proof}
See corollary 2 in \cite{Bur} for a proof.
\end{proof}
\vspace{0.2 cm}

\begin{proposition}[Dusart]\label{Dusart} Let $p_{1}, p_{2},\dots, p_{n}$ be the first $n$ primes. For $n\geq 9$, we have\\
\[p_{1}+p_{2}+\cdots+ p_{n}< \frac{1}{2}np_{n}.\]
\end{proposition}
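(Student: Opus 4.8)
The plan is to translate the inequality into a statement about the prime counting function and then establish that via explicit estimates. By partial (Abel) summation, writing $\pi$ for the prime counting function,
\[
\sum_{i=1}^{n} p_i \;=\; p_n\,\pi(p_n)-\int_{2}^{p_n}\pi(t)\,dt \;=\; n\,p_n-\int_{2}^{p_n}\pi(t)\,dt ,
\]
so the assertion $\sum_{i=1}^{n}p_i<\tfrac12 n p_n$ is equivalent to
\[
\int_{2}^{p_n}\pi(t)\,dt \;>\; \tfrac12\,p_n\,\pi(p_n).
\]
It therefore suffices to prove $\int_{2}^{x}\pi(t)\,dt>\tfrac12\,x\,\pi(x)$ for every real $x$ in the relevant range, in particular for $x=p_n$ with $n\ge 9$, i.e. $x\ge 23$. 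Informally this says that on average $\pi$ behaves like a concave function vanishing at the origin; the work is in making it quantitative.

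First I would dispose of a large-$x$ regime. Using an explicit lower bound of Rosser--Schoenfeld/Dusart type, say $\pi(t)\ge\frac{t}{\ln t}\bigl(1+\tfrac{1}{\ln t}\bigr)$ for $t\ge t_0$, one gets for $x\ge t_0$, since $\pi\ge 0$,
\[
\int_{2}^{x}\pi(t)\,dt \;\ge\; \int_{t_0}^{x}\Bigl(\frac{t}{\ln t}+\frac{t}{\ln^2 t}\Bigr)\,dt \;\ge\; \frac{x^2}{2\ln x}+\frac{3x^2}{4\ln^2 x}-c_1 ,
\]
where the integral is handled by repeatedly invoking $\frac{d}{dt}\frac{t^2}{2\ln^k t}=\frac{t}{\ln^k t}-\frac{k\,t}{2\ln^{k+1}t}\le \frac{t}{\ln^k t}$ and $c_1$ is an explicit constant. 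Using an explicit upper bound $\pi(x)\le\frac{x}{\ln x}\bigl(1+\tfrac{1}{\ln x}+\tfrac{c}{\ln^2 x}\bigr)$ gives $\tfrac12 x\pi(x)<\frac{x^2}{2\ln x}+\frac{x^2}{2\ln^2 x}+\frac{c\,x^2}{2\ln^3 x}$. Subtracting, the two leading terms cancel and a positive term $\frac{x^2}{4\ln^2 x}$ survives against the lower-order contributions, so the inequality holds for all $x\ge X_0$ with an explicit $X_0$ well within computational reach. For the finitely many integers $n$ with $9\le n$ and $p_n<X_0$ I would simply tabulate $p_1+\cdots+p_n$ and $\tfrac12 n p_n$ and check the inequality directly; one should note in passing that it genuinely fails at $n=8$ (there $77>76$), which is exactly why the hypothesis is $n\ge 9$.

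The main obstacle is the bookkeeping in the large-$x$ regime: because the first two terms of $\int_{2}^{x}\pi$ and of $\tfrac12 x\pi(x)$ coincide, everything hinges on the size of the \emph{next} term, so the expansions must be carried far enough and the constants $c_1,c,t_0$ kept honest, and — since the surviving margin is only of order $x^2/\ln^2 x$ — care is needed so that $X_0$, and hence the range of the concluding finite verification, stays manageable. A variant that avoids $\pi$ altogether is to bound $\sum_{i\le n}p_i$ from above using an explicit inequality $p_i< i(\ln i+\ln\ln i-c')$ with $c'>\tfrac14$ (Dusart), replacing the tail of the sum by $\int x(\ln x+\ln\ln x-c')\,dx$, and to compare with $\tfrac12 n p_n\ge \tfrac12 n^2(\ln n+\ln\ln n-1)$; here the term $-\tfrac14 x^2$ produced by $\int x\ln x\,dx$ supplies the needed margin once $c'>\tfrac14$, and again one is left with a finite check. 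Either way, the genuine content is entirely in pinning down the explicit constants.
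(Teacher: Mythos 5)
The paper does not actually prove this proposition: it is imported verbatim from Dusart (Theorem 1.14 of \cite{Dus}) with a bare citation, so there is no in-paper argument to compare yours against, and any genuine proof you give is necessarily ``a different route.'' Your sketch is a reasonable self-contained reconstruction. The Abel-summation identity $\sum_{i\le n}p_i = n p_n - \int_2^{p_n}\pi(t)\,dt$ is correct, the reduction to $\int_2^x \pi(t)\,dt > \tfrac12 x\pi(x)$ for $x\ge 23$ is the right move, and the second-order analysis is sound: with $\pi(t)\ge \tfrac{t}{\ln t}\bigl(1+\tfrac{1}{\ln t}\bigr)$ the integral acquires $\tfrac{3x^2}{4\ln^2 x}$ at second order while $\tfrac12 x\pi(x)$ carries only $\tfrac{x^2}{2\ln^2 x}$ (or $\tfrac{0.6381\,x^2}{\ln^2 x}$ if one uses the paper's own Proposition~\ref{Dusart2}), so after the leading terms cancel a margin of order $x^2/\ln^2 x$ survives. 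Your remark that the inequality genuinely fails at $n=8$ ($77>76$) and first holds at $n=9$ ($100<103.5$) is correct and explains the hypothesis. Two caveats. First, what you have is a strategy, not a proof: $t_0$, $c_1$, $X_0$ are never pinned down and the concluding finite verification is not performed, so as written this is a plan whose entire content (as you concede) lies in the unexecuted constant-chasing. Second, the threshold in your alternative route is miscomputed: comparing $\sum_{i\le n} i(\ln i+\ln\ln i-c')\approx \tfrac{n^2}{2}(\ln n+\ln\ln n-c')-\tfrac{n^2}{4}$ with $\tfrac12 n p_n\ge \tfrac{n^2}{2}(\ln n+\ln\ln n-1)$ requires $\tfrac{c'}{2}+\tfrac14>\tfrac12$, i.e.\ $c'>\tfrac12$, not $c'>\tfrac14$; this slip is harmless only because Dusart's actual constant $0.9484$ exceeds $\tfrac12$.
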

\vspace{0.2 cm}

\begin{proof} See theorem 1.14 in \cite{Dus} for a proof.
\end{proof}
\vspace{0.2 cm}

%

\begin{proposition}[Dusart]\label{Dusart2} For $x\geq 2$, $\pi(x)\leq \frac{x}{\log x}\bigg (1 + \frac{1.2762}{\log x}\bigg)$. For $x\geq 599$, $\pi(x)\geq \frac{x}{\log x}\bigg (1 + \frac{1}{\log x}\bigg)$.
\end{proposition}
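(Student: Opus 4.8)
The plan is to recognise Proposition~\ref{Dusart2} as an \emph{explicit} form of the Prime Number Theorem and to prove it by reducing to explicit bounds on Chebyshev's functions. First I would pass from $\pi(x)$ to $\theta(x)=\sum_{p\le x}\log p$ by Abel summation, using
\[
\pi(x)=\frac{\theta(x)}{\log x}+\int_{2}^{x}\frac{\theta(t)}{t\log^{2}t}\,dt,
\]
with the contribution near the lower endpoint handled directly from $\pi(2)=1$. This turns the problem into producing two-sided estimates of the form $|\theta(x)-x|\le \varepsilon(x)\,x$ with an explicit, slowly decaying $\varepsilon(x)$, and then integrating: the main term $x/\log x$ comes from $\theta(x)/\log x$, the secondary correction of size $1/\log^{2}x$ comes from $\int_{2}^{x}dt/\log^{2}t$, and $\varepsilon$ must be small enough not to spoil the constants $1.2762$ and $1$.

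Second, to bound $\theta$ I would instead work with $\psi(x)=\sum_{p^{k}\le x}\log p$, since $0\le \psi(x)-\theta(x)=\sum_{k\ge 2}\theta(x^{1/k})=O(\sqrt{x}\log^{2}x)$ is elementary, and then invoke the Riemann--von Mangoldt explicit formula
\[
\psi(x)=x-\sum_{\rho}\frac{x^{\rho}}{\rho}-\log(2\pi)-\tfrac12\log\!\bigl(1-x^{-2}\bigr),
\]
the sum running over the nontrivial zeros $\rho$ of $\zeta$. To control $\sum_{\rho}x^{\rho}/\rho$ I would combine (i) an explicit zero-free region, say $\sigma>1-1/(R\log|t|)$ for $|t|\ge t_{0}$ with explicit $R,t_{0}$, together with the Riemann--von Mangoldt zero-counting bound with explicit constants, and (ii) a large-scale numerical verification that every zero with $0<\operatorname{Im}\rho\le T_{0}$ lies on the critical line. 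Optimising the truncation height in the explicit formula against $x$ then yields $\varepsilon(x)=O\bigl(\exp(-c\sqrt{\log x})\bigr)$ for $x$ beyond an explicit threshold.

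Third, below that threshold --- in particular to pin down the clean constant $1$ valid from $x=599$ and the constant $1.2762$ valid all the way down to $x=2$ --- I would verify both inequalities by direct computation: since $\pi$ jumps only at primes and each right-hand side is eventually monotone in $x$, it suffices to check the inequality at each prime in the finite remaining range and at the points where monotonicity of the right-hand sides sets in.

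The hard part is the second step, namely forcing $\varepsilon(x)$ down far enough that the $O(1/\log x)$ secondary term is not swamped: this requires a genuinely strong explicit zero-free region, a high verification height $T_{0}$, and then tracking every constant through the explicit formula and the partial summation without slack. That delicate optimisation is precisely what Dusart carries out in \cite{Dus}, so here we simply quote the statement.
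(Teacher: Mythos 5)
The paper offers no proof of this proposition; it simply cites Theorem 6.9 of Dusart's paper, and your proposal ultimately does the same, so the two are in essence identical. Your added roadmap (partial summation from $\pi$ to $\theta$, comparison of $\theta$ with $\psi$, the explicit formula combined with an explicit zero-free region and a numerically verified zero height, plus a finite check for small $x$) is an accurate description of how Dusart actually establishes such bounds, but like the paper it is a citation rather than a proof.
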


\begin{proof} See theorem 6.9 in \cite{Dus[1]} for a proof. 
\end{proof}

\begin{proposition}[Dusart]\label{Dusart1} For $x\geq 2973$, \\
\[\prod_{p\leq x}\bigg (1-\frac{1}{p}\bigg) > \frac{e^{-\gamma}}{\log x}\bigg(1-\frac{0.2}{(\log x)^{2}}\bigg).\]
\vspace{0.2 cm}\\
where $\gamma$ is the Euler's constant.\\
\end{proposition}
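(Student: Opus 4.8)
\emph{Proof proposal.} The statement is a sharp explicit form of Mertens' third theorem, and the natural route is to pass to logarithms and reduce it to an effective version of Mertens' \emph{second} theorem. Write $L=\log x$. Using $\log(1-1/p)=-\sum_{k\ge 1}\tfrac{1}{kp^{k}}$ we get
\[
-\log\prod_{p\le x}\Big(1-\frac1p\Big)=\sum_{p\le x}\frac1p+\sum_{p\le x}\sum_{k\ge 2}\frac{1}{kp^{k}} .
\]
The classical identity $M=\gamma+\sum_{p}\big(\log(1-1/p)+1/p\big)$ for the Meissel--Mertens constant $M$ says precisely that the full double sum equals $\gamma-M$, while its tail is harmless: $0<\sum_{p>x}\sum_{k\ge 2}\tfrac{1}{kp^{k}}\le\sum_{p>x}\tfrac{1}{p(p-1)}\le\tfrac{1}{\lfloor x\rfloor}$. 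Feeding in Mertens' second theorem in the form $\sum_{p\le x}\tfrac1p=\log L+M+r(x)$ gives
\[
\prod_{p\le x}\Big(1-\frac1p\Big)=\frac{e^{-\gamma}}{L}\,e^{-\delta(x)},\qquad \delta(x)=r(x)-\sum_{p>x}\sum_{k\ge 2}\frac{1}{kp^{k}}\le r(x).
\]
Since $e^{-t}\ge 1-t$ for every real $t$ (strictly unless $t=0$), the proposition follows as soon as one proves the one-sided bound $r(x)\le 0.2/L^{2}$ for all $x\ge 2973$.

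The remaining task is therefore an explicit upper bound for $r(x)=\sum_{p\le x}\tfrac1p-\log\log x-M$, and this is where the genuine analytic input enters. I would obtain it by two successive Abel summations starting from an effective Chebyshev estimate $|\theta(x)-x|\le\varepsilon(x)\,x$, where $\theta(x)=\sum_{p\le x}\log p$ and $\varepsilon(x)$ is one of the known small explicit error functions (coming from the numerically verified part of the Riemann hypothesis together with an explicit zero-free region for $\zeta$). A first partial summation converts the bound on $\theta$ into $\sum_{p\le x}\tfrac{\log p}{p}=\log x+C_{1}+O(\varepsilon(x))$ for an explicit constant $C_{1}$; a second partial summation then converts $\sum_{p\le x}\tfrac{\log p}{p}$ into $\sum_{p\le x}\tfrac1p=\log\log x+M+O\!\big(\varepsilon(x)/\log x\big)$, so that $r(x)\ll\varepsilon(x)/\log x$ with an \emph{explicit} implied constant. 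For the available choices of $\varepsilon$ this is of size $c/L^{2}$ (or better), and squeezing out the specific constant $0.2$ is a matter of selecting a sufficiently sharp $\varepsilon$ and tracking the constants carefully.

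Finally, the effective estimates above are strong enough to give $r(x)\le 0.2/L^{2}$ only once $x$ exceeds some explicit threshold $x_{0}$; for $2973\le x\le x_{0}$ the inequality is verified by a finite computation. Indeed $\prod_{p\le x}(1-1/p)$ is constant on each interval $[p_{n},p_{n+1})$ between consecutive primes, whereas $\tfrac{e^{-\gamma}}{\log x}\big(1-0.2/(\log x)^{2}\big)$ is decreasing for $x\ge 2973$, so it is enough to check $\prod_{p\le p_{n}}(1-1/p)>\tfrac{e^{-\gamma}}{\log p_{n+1}}\big(1-0.2/(\log p_{n+1})^{2}\big)$ for the finitely many primes $p_{n}\le x_{0}$. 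The one genuinely hard ingredient is the explicit bound on $\theta(x)-x$: this is the deep analytic core, and obtaining the clean constant $0.2$ and the clean cutoff $2973$ is a matter of optimization rather than of a new idea.
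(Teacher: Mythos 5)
The paper does not prove this statement at all: it is quoted as a known theorem of Dusart and the ``proof'' is a citation to Theorem 6.12 of the reference \cite{Dus[1]}. Your proposal, by contrast, reconstructs the standard architecture by which such effective Mertens-type product bounds are actually established (in Rosser--Schoenfeld and in Dusart's work): take logarithms, use the identity $M=\gamma+\sum_{p}\bigl(\log(1-1/p)+1/p\bigr)$ to reduce the third Mertens theorem to an explicit one-sided bound $r(x)\le 0.2/(\log x)^{2}$ on the error in the second, derive that bound by partial summation from an explicit estimate for $\theta(x)-x$, and finish with a finite computation below the analytic threshold. This reduction is correct, and the target is in fact attainable: the classical Rosser--Schoenfeld inequality $r(x)\le \tfrac{1}{10(\log x)^{2}}+\tfrac{4}{15(\log x)^{3}}$ already gives $r(x)<0.2/(\log x)^{2}$ in the relevant range, so the constant $0.2$ is not even tight from this route. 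The honest caveat is that the genuinely deep ingredient --- the explicit bound on $\theta(x)-x$ --- is still imported from the literature in your sketch, so in the end you, like the paper, rest on cited analytic input; you have just pushed the citation one level deeper. One small slip in the finite-verification step: since the right-hand side $\tfrac{e^{-\gamma}}{\log x}\bigl(1-0.2/(\log x)^{2}\bigr)$ is \emph{decreasing} while $\prod_{p\le x}(1-1/p)$ is constant on $[p_{n},p_{n+1})$, the supremum of the right-hand side on that interval occurs at the left endpoint, so the inequality must be checked at $x=p_{n}$, not at $p_{n+1}$; checking against $\log p_{n+1}$ verifies only a weaker statement. The fix is immediate and does not affect the soundness of the approach.
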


\begin{proof}
See theorem 6.12 in \cite{Dus[1]} for a proof.
\end{proof}

\begin{proposition}[Rosser]\label{Rosser} For $x\geq 55$, we have $\pi(x)> \frac{x}{\log x + 2}$.
\end{proposition}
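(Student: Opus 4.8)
The plan is to split the proof at $x=599$. For $x\ge 599$ the inequality is an immediate consequence of the explicit lower bound for $\pi(x)$ already recorded in Proposition~\ref{Dusart2}, and the remaining range $55\le x<599$ is handled by a finite verification.

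Suppose first that $x\ge 599$. By Proposition~\ref{Dusart2},
\[
\pi(x)\;\ge\;\frac{x}{\log x}\Bigl(1+\frac{1}{\log x}\Bigr)\;=\;\frac{x(\log x+1)}{(\log x)^{2}},
\]
so it suffices to check that $\dfrac{x(\log x+1)}{(\log x)^{2}}>\dfrac{x}{\log x+2}$. Since every quantity in sight is positive for $x\ge 599$, clearing denominators shows this is equivalent to $(\log x+1)(\log x+2)>(\log x)^{2}$, i.e.\ to $3\log x+2>0$, which is obvious. Hence the inequality holds for all $x\ge 599$.

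Now let $55\le x<599$. On each interval $[p_{n},p_{n+1})$ between consecutive primes, $\pi$ is constant with value $n$, while $x\mapsto x/(\log x+2)$ is continuous and strictly increasing; therefore $\pi(x)>x/(\log x+2)$ holds throughout $[p_{n},p_{n+1})$ as soon as $n\ge p_{n+1}/(\log p_{n+1}+2)$, and on the first piece $[55,59)$ it is enough that $\pi(55)=16>55/(\log 55+2)\approx 9.16$. Running through the finitely many primes $p$ with $55\le p\le 599$ and checking the corresponding inequalities — a routine computation, with a comfortable margin in each case — completes the proof.

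A remark on where the difficulty really lies: the only non-elementary input above is the explicit form of the prime number theorem quoted in Proposition~\ref{Dusart2}, which rests on zero-free regions for $\zeta(s)$ together with numerical verification of the Riemann hypothesis up to a large height. A genuinely self-contained proof would have to reproduce an explicit lower bound of this kind — for instance by establishing an explicit Chebyshev-type estimate $\theta(x)\ge(1-\varepsilon(x))\,x$ and inserting it into the Abel summation identity $\pi(x)=\dfrac{\theta(x)}{\log x}+\displaystyle\int_{2}^{x}\frac{\theta(t)\,dt}{t(\log t)^{2}}$ — and that analytic step, rather than the bookkeeping above, would be the main obstacle.
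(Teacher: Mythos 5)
Your argument is correct, but it takes a genuinely different route from the paper: the paper offers no derivation at all for this proposition, simply citing Theorem 29 of Rosser's paper \cite{Ros}, whereas you deduce the bound from the Dusart estimate already quoted in Proposition~\ref{Dusart2} together with a finite check on $[55,599)$. Your reduction for $x\geq 599$ is clean --- clearing denominators does reduce the claim to $3\log x+2>0$ --- and the interval-by-interval argument for $55\leq x<599$ is logically sound (it suffices to compare $\pi(p_n)=n$ with the value of the increasing function $x/(\log x+2)$ at the right endpoint $p_{n+1}$), although you assert rather than execute the finite verification; it does go through, with $\pi(x)$ exceeding $x/(\log x+2)$ by a wide margin throughout that range (e.g.\ $\pi(593)=108$ versus $599/(\log 599+2)\approx 71.4$). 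What your approach buys is economy: Proposition~\ref{Rosser} becomes a corollary of results the paper already imports, rather than an independent citation, and your closing remark correctly locates the irreducible analytic content in the explicit prime-counting bound of Proposition~\ref{Dusart2}. What the paper's citation buys is independence from Dusart: Rosser's 1941 bound predates and does not rely on the sharper modern estimates, so quoting it directly keeps the logical dependencies of the two propositions separate.
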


\begin{proof} See theorem 29 in \cite{Ros} for a proof.
\end{proof}

\section{Main Results}
In this section we prove the main results of this paper. To be more precise, we prove the following.\\

\begin{enumerate}
\item[a)] $S(g)$ is a bounded subset of $\mathbb{R}$.
\item[b)] $f(g)=|S(g)|$ has at least exponential growth.
\item[c)] $h(g)=max\{m\mid m\in S(g)\}$ has at least exponential growth. \\
\end{enumerate}

\subsubsection{Boundedness of $S(g)$} For each $g\in \mathbb{N}$, let $S(g)=\{m\in \mathbb{N}\mid \exists\, A\neq 1 \in G \textrm{ with } o(A)=m\}$. In this section we show that $S(g)$ is a bounded subset of $\mathbb{R}$.\\

Let $m = p_1^{\alpha_1} \ldots p_k^{\alpha_k} \in S(g)$. Suppose $p_{i}>2g+1$ for some $i\in \{1, 2, \ldots, k\}$. This would imply that $\phi(p_{i}^{\alpha_{i}})=p_{i}^{\alpha_{i}-1}(p_{i}-1)>2g$, which contradicts proposition~\ref{burgisser}. It follows that all primes in the factorization of $m$ should be $\leq 2g+1$ and hence $k\leq g+1$.\\

\begin{theorem}\label{boundedness} For $g\in \mathbb{N}$, $S(g)$ is a bounded subset of $\mathbb{R}$.
\end{theorem}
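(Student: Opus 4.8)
The plan is to upgrade the observation already made in the discussion preceding the statement. That discussion bounds the \emph{number} of distinct prime divisors of any $m \in S(g)$ by $k \le g+1$; what remains is to bound each prime power $p_i^{\alpha_i}$ dividing $m$, and then to multiply these bounds together.

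First I would extract from Proposition~\ref{burgisser} that, for $m = p_1^{\alpha_1}\cdots p_k^{\alpha_k} \in S(g)$, every term $\phi(p_i^{\alpha_i})$ occurring in the relevant sum satisfies $\phi(p_i^{\alpha_i}) \le 2g$; in case (a) the single omitted term corresponds to $p_1^{\alpha_1} = 2$ (since $m \equiv 2 \pmod 4$ forces $2 \,\|\, m$), which is trivially $\le 4g$ for $g \ge 1$. Next I would record the elementary inequality
\[
p^{\alpha} \le 2\,\phi(p^{\alpha}) \qquad \text{for every prime } p \text{ and every } \alpha \ge 1,
\]
which follows at once from $\phi(p^{\alpha}) = p^{\alpha-1}(p-1)$ together with $p \ge 2$. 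Combining the two observations, every prime power appearing in the factorization of $m$ satisfies $p_i^{\alpha_i} \le 2\,\phi(p_i^{\alpha_i}) \le 4g$.

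Finally, using $k \le g+1$, I would conclude
\[
m \;=\; \prod_{i=1}^{k} p_i^{\alpha_i} \;\le\; (4g)^{k} \;\le\; (4g)^{g+1},
\]
so that $S(g) \subseteq \{1, 2, \ldots, \lfloor (4g)^{g+1} \rfloor\}$ is a bounded (indeed finite) subset of $\mathbb{R}$.

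I do not expect a genuine obstacle here: the entire argument is essentially immediate from Proposition~\ref{burgisser}. The only point that needs a little care is the $m \equiv 2 \pmod 4$ case of Proposition~\ref{burgisser}, where one must notice that the prime power excluded from the sum is exactly $2$ and therefore contributes only a harmless factor to the product. I would also remark that a slightly sharper estimate — applying the AM--GM inequality to the numbers $\phi(p_i^{\alpha_i})$, whose sum is at most $2g$, in combination with $p_i^{\alpha_i} \le 2\,\phi(p_i^{\alpha_i})$ — already yields a bound of the shape $m \le e^{O(g)}$, which is consistent with the exponential lower bound for $h(g)$ established later in the paper; but any explicit bound is enough for boundedness.
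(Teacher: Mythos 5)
Your proof is correct, but it takes a genuinely different and more elementary route than the paper. You bound each prime power individually: from Proposition~\ref{burgisser} each $\phi(p_i^{\alpha_i})$ appearing in the sum is at most $2g$, the inequality $p^{\alpha}\leq 2\phi(p^{\alpha})$ then gives $p_i^{\alpha_i}\leq 4g$, and multiplying over the at most $g+1$ prime powers yields $m\leq (4g)^{g+1}$. The paper instead fixes the set of admissible primes (all primes up to $2g+1$), and for each subset runs a constrained continuous optimization via Lagrange multipliers on the exponents $\alpha_i$, subject to $\sum q_i^{\alpha_i}(1-\tfrac{1}{q_i})\leq 2g+1$; the optimum occurs when all the quantities $q_i^{\alpha_i}(1-\tfrac{1}{q_i})$ are equal, and after maximizing $\big(\tfrac{2g+1}{x}\big)^{x}$ over $x$ and estimating $\prod(1-\tfrac1{p_i})$ from below, this produces the sharper explicit bound $m\leq 3e^{3g}$. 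What the paper's extra work buys is a bound of the form $e^{O(g)}$, matching (up to the constant in the exponent) the exponential lower bound for $h(g)$ proved later, whereas your bound $(4g)^{g+1}=e^{O(g\log g)}$ is slightly weaker asymptotically though entirely sufficient for boundedness. Your closing remark is on target: applying AM--GM to the $\phi(p_i^{\alpha_i})$, whose sum is at most $2g$, is essentially the discrete counterpart of the paper's Lagrange-multiplier step and recovers a bound of the same exponential shape.
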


\begin{proof} For $g\in \mathbb{N}$, fix $k=\pi(2g+1)$ and $P=\{p_{1}, p_{2}, \dots, p_{k}\}$ be the set of first $k$ primes arranged in increasing order.
The prime factorization of any $m \in S(g)$ involves primes only from the set $P$. The total number of non-empty subsets of $P$ is $2^{k}-1$. Let us denote the collection of these subsets of
$P$ as $\{P_1,P_2, \ldots P_{2^k-1}\}$. For $1\leq a \leq 2^{k}-1$, let
$P_{a}$ denote the subset $\{q_{1}, q_{2}, \dots, q_{n}\}$ of $P$, where $n=n(P_{a})$ is the number of primes in the subset $P_{a}$. For a fixed $a$ (and hence fixed $P_a$), define\\
\begin{align*}m_{a} &=m_{a}(\alpha_{1}, \dots, \alpha_{n}) = q_{1}^{\alpha_{1}}q_{2}^{\alpha_{2}}\dots q_{n}^{\alpha_{n}},\\\\
r_{a} &=r_{a}(\alpha_{1}, \dots, \alpha_{n}) = \displaystyle \sum_{i=1}^{n}q_{i}^{\alpha_{i}}\bigg(1-\frac{1}{q_{i}}\bigg),\end{align*}
where $\alpha_{i} > 0$. The key idea of the proof is to maximize the function $m_{a}$ considered as a function of the real variables $(\alpha_1,\alpha_2, \ldots \alpha_n)$ with respect to the inequality constraint $r_{a}\leq 2g+1$. We let $M_{a}$ denote this maximum. Using the Lagrange multiplier method we see that the function $m_{a}$ attains the maximum $M_a$ precisely when $q_{i}^{\alpha_{i}}(1-\frac{1}{q_{i}})= q_{j}^{\alpha_{j}}(1-\frac{1}{q_{j}})$ for all $1\leq i, j \leq n$. Under the above condition, the constraint
$r_a\leq 2g+1$ gives us $q_i^{\alpha_i} (1-\frac{1}{q_i}) \leq \frac{2g+1}{n}$, for any $1\leq i \leq n$.
Now

$$m_a(\alpha_1,\alpha_2,\ldots \alpha_n) = \frac{q_{1}^{\alpha_{1}}\big(1-\frac{1}{q_{1}}\big)q_{2}^{\alpha_{2}}\big(1-\frac{1}{q_{2}}\big)\ldots q_{n}^{\alpha_{n}}\big(1-\frac{1}{q_{n}}\big)}
{\displaystyle \prod_{i=1}^{n}\bigg(1-\frac{1}{q_{i}}\bigg)}.$$

From this it follows that for $1\leq a \leq 2^{k}-1$,

\[M_a = \frac{\bigg(q_1^{\alpha_1}\big(1-\frac{1}{q_{1}}\big)\bigg)^n}{{\displaystyle \prod_{i=1}^{n}\bigg(1-\frac{1}{q_{i}}\bigg)}} \leq \frac{\bigg(\frac{2g+1}{n}\bigg)^{n}}{\displaystyle \prod _{i=1}^{k}\bigg(1-\frac{1}{p_{i}}\bigg )}.\]
%

Therefore, for $m\in S(g)$, we have\\

\begin{align*}
m & \leq \max\limits_{1\leq a\leq 2^{k}-1} M_a\\
\vspace{0.1 cm}
& \leq  \frac{\max\limits_{1\leq a\leq 2^{k}-1}\bigg(\frac{2g+1}{n}\bigg)^{n}}{\displaystyle \prod _{i=1}^{k}\bigg(1-\frac{1}{p_{i}}\bigg )}\\
\vspace{0.1 cm}
& \leq \frac{e^{\frac{2g+1}{e}}}{\displaystyle \prod _{i=1}^{k}\bigg(1-\frac{1}{p_{i}}\bigg )}\\
\end{align*}

\noindent In the above computation, we have used the fact that for $x>0$, $\bigg (\frac{2g+1}{x}\bigg )^{x}$ attains the maximum when $x=(2g+1)/e$.\\

Observing that $\displaystyle\prod_{i=1}^{k} \bigg(1-\frac{1}{p_{i}}\bigg)\geq \frac{1}{2}\frac{2}{3}\bigg(\frac{4}{5}\bigg)^{\pi(2g+1)-2}$, we have

\[m\leq 3(5/4)^{\pi(2g+1)-2}e^{\frac{2g+1}{e}}\leq 3e^{\big(\frac{2g+1}{e}+ g-1\big)}\leq 3e^{3g}. \]

\end{proof}

\begin{corollary}\label{bound} For $g\in \mathbb{N}$, $f(g)\leq h(g)\leq 3e^{3g}$.
\end{corollary}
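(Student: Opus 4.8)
The corollary asserts two inequalities: $f(g)\leq h(g)$ and $h(g)\leq 3e^{3g}$. The plan is to deduce both directly from Theorem~\ref{boundedness} together with elementary observations about $S(g)$.

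First I would establish $f(g)\leq h(g)$. By definition $h(g)$ is the largest element of the finite set $S(g)\subseteq\mathbb{N}$, so every element of $S(g)$ lies in $\{1,2,\dots,h(g)\}$; hence $|S(g)|\leq h(g)$, i.e. $f(g)\leq h(g)$. This requires only that $S(g)$ be a set of positive integers bounded above by $h(g)$, which is immediate from Theorem~\ref{boundedness} (guaranteeing the maximum exists) and the definition of $h$. One should perhaps remark that $S(g)$ is nonempty (e.g. $2\in S(g)$ for every $g$, via $-I\in\Sp(2g,\mathbb{Z})$), so that $h(g)$ is well-defined, though the statement is vacuously fine even without this.

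Next I would establish $h(g)\leq 3e^{3g}$. Theorem~\ref{boundedness} shows that every $m\in S(g)$ satisfies $m\leq 3e^{3g}$; taking the maximum over $m\in S(g)$ gives $h(g)\leq 3e^{3g}$ directly.

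I do not anticipate any genuine obstacle here: the corollary is a packaging of Theorem~\ref{boundedness}. The only point demanding a word of care is the trivial-looking first inequality $f(g)\le h(g)$, which silently uses that the elements of $S(g)$ are distinct positive integers not exceeding its maximum; I would state this explicitly to keep the argument self-contained. A one- or two-line proof suffices.
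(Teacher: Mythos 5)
Your proof is correct and is essentially the paper's own argument: the paper simply notes that every $m\in S(g)$ satisfies $m\leq 3e^{3g}$ by Theorem~\ref{boundedness} and lets both inequalities follow. Your extra remark that $f(g)\leq h(g)$ because $S(g)$ consists of distinct positive integers not exceeding its maximum just makes explicit what the paper leaves implicit.
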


\begin{proof} For $m\in S(g)$, we have $m\leq 3e^{3g}$. The result follows.
\end{proof}
\vspace{0.2 cm}

\begin{remark} \textit{Upper bound for $S(g)$ for $g\geq 1486$}: The bound obtained in theorem~\ref{boundedness} is an absolute upper bound for $S(g)$. For $g\geq 1486$ , we can improve the above upper bound as  follows: Using proposition~\ref{Dusart1}, we get
\[\prod _{i=1}^{k}\bigg(1-\frac{1}{p_{i}}\bigg ) > \frac{1}{2}\frac{e^{-\gamma}}{\log(2g+1)}. \]
Therefore it follows that for $m\in S(g)$, we have
\[m\leq \frac{e^{\frac{2g+1}{e}}}{\displaystyle \prod _{i=1}^{k}\bigg(1-\frac{1}{p_{i}}\bigg )} \leq 2e^{\gamma}\log(2g+1)e^{\frac{2g+1}{e}}.\]
\end{remark}
\vspace{0.2 cm}

\subsubsection{Growth of $f(g)$ and $h(g)$}

In the previous section, we computed an upper bound for the functions $f(g)$ and $h(g)$. In this section we show that $f(g)$ and $h(g)$ have at least exponential growth. 

\begin{lemma}\label{dusart application} For $x\geq 23$, we have \[\displaystyle \sum_{p\leq x}p < \frac{1}{2}x\pi(x) \]
where the sum is over all primes $p\leq x$.
\end{lemma}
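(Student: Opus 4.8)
The plan is to reduce the claimed inequality to the Dusart bound on partial sums of primes (Proposition~\ref{Dusart}), which already gives $p_1 + \cdots + p_n < \tfrac12 n p_n$ for $n \geq 9$, and then to handle the small cases by hand. The first step is to set $n = \pi(x)$ and observe that $\sum_{p \leq x} p = p_1 + \cdots + p_n$ while $\tfrac12 x \pi(x) = \tfrac12 x n$. Since $p_n \leq x$ (as $p_n$ is the largest prime $\leq x$), Proposition~\ref{Dusart} immediately yields $\sum_{p \leq x} p < \tfrac12 n p_n \leq \tfrac12 n x = \tfrac12 x \pi(x)$, provided $n = \pi(x) \geq 9$, i.e. provided $x \geq 23$ (since $\pi(23) = 9$, the ninth prime being $23$ itself).

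The second step is to check that the hypothesis $x \geq 23$ is exactly what guarantees $\pi(x) \geq 9$: the primes up to $23$ are $2,3,5,7,11,13,17,19,23$, so $\pi(x) \geq 9$ precisely when $x \geq 23$, and for such $x$ Proposition~\ref{Dusart} applies with $n = \pi(x)$. One subtlety worth spelling out: Proposition~\ref{Dusart} is stated for the sum of the \emph{first} $n$ primes, and $\sum_{p\le x} p$ is indeed the sum of the first $\pi(x)$ primes, so the translation is exact. The only inequality we insert beyond Dusart's is $p_{\pi(x)} \leq x$, which is immediate from the definition of $\pi$.

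I do not expect any genuine obstacle here; the lemma is essentially a repackaging of Proposition~\ref{Dusart} with the cosmetic replacement of $p_n$ by the upper bound $x$ and the index $n$ by $\pi(x)$. The only thing to be careful about is the boundary: one must confirm that $x \geq 23$ forces $n \geq 9$ and not merely $n \geq 8$, and that Dusart's bound is strict (which it is), so that the final inequality is strict as claimed. If one wanted, the cases $2 \le x < 23$ could also be verified numerically, but they are not needed since the lemma only asserts the bound for $x \geq 23$.
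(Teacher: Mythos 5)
Your proof is correct and follows the same route as the paper: set $n=\pi(x)$, apply Proposition~\ref{Dusart} to the sum of the first $n$ primes, and use $p_n\leq x$. You are in fact slightly more careful than the paper in verifying that $x\geq 23$ forces $\pi(x)\geq 9$ so that Dusart's hypothesis holds.
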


\begin{proof} Let $n$ be such that $p_{n}\leq x < p_{n+1}$, where $p_{n}$ denotes the $n^{th}$ prime number. It follows from proposition~\ref{Dusart}, that for $x\geq 23$, we have \[\displaystyle \sum_{p\leq x}p= \sum_{p\leq p_{n}}p < \frac{1}{2}np_{n}\leq \frac{1}{2}\pi(x)x.\]
\end{proof}

Before we proceed further, we set up some notation which we need in the following results. \\

\noindent Let $K (\geq e) \in \mathbb{N}$ be such that for $\sqrt{K\log K}\geq 23$.

\begin{lemma}\label{estimate for pi(x)} For $g\geq K$, $\pi(\sqrt{g\log(g)}) < \frac{3\sqrt{g\log(g)}}{\log(g\log(g))}.$
\end{lemma}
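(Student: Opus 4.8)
The plan is to set $x := \sqrt{g\log g}$ and rewrite the target inequality so that it becomes a statement purely about $\pi(x)$ versus $x/\log x$. The key observation is that $\log(g\log g) = \log(x^2) = 2\log x$, so the claimed bound $\pi(x) < \dfrac{3\sqrt{g\log g}}{\log(g\log g)}$ is exactly $\pi(x) < \dfrac{3}{2}\cdot\dfrac{x}{\log x}$. Thus everything reduces to comparing the Dusart upper bound for $\pi(x)$ with $\tfrac{3}{2}\cdot\tfrac{x}{\log x}$.

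First I would record that, since $g\mapsto g\log g$ is increasing for $g\geq 1$ and $g\geq K$, we have $x=\sqrt{g\log g}\geq \sqrt{K\log K}\geq 23$; in particular $x\geq 2$, so Proposition~\ref{Dusart2} applies and gives
\[
\pi(x)\leq \frac{x}{\log x}\left(1+\frac{1.2762}{\log x}\right).
\]
Next I would bound the parenthetical factor: because $x\geq 23$ we have $\log x\geq \log 23 > 3.13$, hence $\dfrac{1.2762}{\log x} < \dfrac{1.2762}{3.13} < \dfrac{1}{2}$, and therefore $1+\dfrac{1.2762}{\log x} < \dfrac{3}{2}$.

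Combining the last two displays yields $\pi(x) < \dfrac{3}{2}\cdot\dfrac{x}{\log x}$. Finally I would substitute back $\log x = \tfrac12\log(g\log g)$ and $x=\sqrt{g\log g}$ to get $\pi(\sqrt{g\log g}) < \dfrac{3\sqrt{g\log g}}{\log(g\log g)}$, which is the assertion. There is essentially no hard step here: the only things to be careful about are (i) checking that the hypothesis $g\geq K$ really does force $x\geq 23$ so that $x$ lies in the range where the Dusart estimate and the crude bound $\log x > 3.13$ are both valid, and (ii) the bookkeeping identity $\log(g\log g)=2\log\sqrt{g\log g}$ that turns the $\tfrac32$ in the intermediate inequality into the $3$ in the statement.
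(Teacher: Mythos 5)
Your proof is correct and follows essentially the same route as the paper: both apply the Dusart bound from Proposition~\ref{Dusart2} at $x=\sqrt{g\log g}$, use $x\geq\sqrt{K\log K}\geq 23$ to bound the correction factor by $\tfrac{3}{2}$, and convert via $\log(g\log g)=2\log\sqrt{g\log g}$. The only cosmetic difference is that you work with the constant $1.2762$ directly, while the paper first relaxes it to $\tfrac{3}{2}$.
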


\begin{proof} For $y>1$, we have $\pi(y) < \frac{y}{\log(y)}\bigg(1+\frac{3}{2\log(y)}\bigg)$ (see proposition~\ref{Dusart2}). Using this estimate we get,

\begin{align*}
\pi(\sqrt{g\log(g)}) & < \frac{\sqrt{g\log(g)}}{\log(\sqrt{g\log(g)})}\bigg ( 1+ \frac{3}{2\log(\sqrt{g\log(g)})}\bigg )\\
& \\
& \leq \frac{\sqrt{g\log g}}{\log(\sqrt{g\log g})}\bigg(1 + \frac{3}{2\log 23}\bigg)\\
&\\
& = \frac{3\sqrt{g\log(g)}}{\log(g\log(g))}.
\end{align*}

\end{proof}

\begin{lemma}
Let $x=\sqrt{g\log(g)}$ and $\displaystyle m= m(g)=\prod_{p\leq x}p$. Then for $g\geq K$, we have $m\in S(g)$.
\end{lemma}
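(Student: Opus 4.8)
The plan is to verify the hypothesis of B\"urgisser's proposition (Proposition~\ref{burgisser}) for the integer $m = \prod_{p \leq x} p$ with $x = \sqrt{g\log g}$. First I would observe that $m$ is squarefree, so every $\alpha_i = 1$, and that $m$ is even (it contains the prime $2$) but $4 \nmid m$; the parity $m \equiv 2 \pmod 4$ depends on the number of odd prime factors, so to be safe I would aim to prove the stronger inequality $\sum_{p \leq x} \phi(p) = \sum_{p \leq x}(p-1) \leq 2g$, which covers case (b) directly and implies case (a) a fortiori. Thus the goal reduces to showing $\sum_{p \leq x}(p-1) \leq 2g$ for all $g \geq K$.

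The key estimate is Lemma~\ref{dusart application}: since $g \geq K$ forces $x = \sqrt{g \log g} \geq 23$, we have $\sum_{p \leq x} p < \tfrac12 x \pi(x)$. Combining this with the upper bound for $\pi(x)$ from Lemma~\ref{estimate for pi(x)}, namely $\pi(x) < \dfrac{3\sqrt{g\log g}}{\log(g\log g)} = \dfrac{3x}{\log(g\log g)}$, I get
\[
\sum_{p \leq x}(p-1) \;\leq\; \sum_{p \leq x} p \;<\; \frac{1}{2}\, x \cdot \frac{3x}{\log(g\log g)} \;=\; \frac{3\,g\log g}{2\log(g\log g)}.
\]
Now it suffices to check that $\dfrac{3\,g\log g}{2\log(g\log g)} \leq 2g$, i.e. that $3\log g \leq 4\log(g\log g) = 4\log g + 4\log\log g$, which holds trivially since $\log g > 0$ and $\log\log g > 0$ for $g \geq K \geq e$ (indeed $\log g \leq 4\log g$ already). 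Hence $\sum_{p\leq x}\phi(p) < 2g$, so by Proposition~\ref{burgisser} there is an element of $\Sp(2g,\mathbb{Z})$ of order $m$, i.e. $m \in S(g)$.

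There is essentially no hard step here: the argument is a direct chain of the two preceding lemmas followed by an elementary logarithmic inequality. The only point requiring a little care is the bookkeeping around the two cases of Proposition~\ref{burgisser} and the exclusion of the first prime $p_1 = 2$ in case (a); handling this by proving the uniform bound $\sum_{p\leq x}\phi(p) \leq 2g$ (rather than splitting on $m \bmod 4$) sidesteps the issue entirely, since dropping the term $\phi(2) = 1$ only decreases the sum. One should also double-check that the constant $K$, defined so that $\sqrt{K\log K} \geq 23$, indeed guarantees $x \geq 23$ for all $g \geq K$ (it does, since $g \mapsto \sqrt{g\log g}$ is increasing), which is exactly what is needed to invoke Lemma~\ref{dusart application}.
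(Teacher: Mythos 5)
Your argument is correct and is essentially the paper's own proof: both chain Lemma~\ref{dusart application} with Lemma~\ref{estimate for pi(x)} to get $\sum_{p\leq x}p<\tfrac{3}{2}\,g\log g/\log(g\log g)\leq \tfrac{3}{2}g\leq 2g$ and then invoke Proposition~\ref{burgisser}; the only (harmless) difference is that the paper uses case~(a) and omits the $p=2$ term, whereas you prove the slightly stronger uniform bound $\sum_{p\leq x}\phi(p)\leq 2g$. (One small correction to an aside in your write-up: since $m$ is even and squarefree, $m\equiv 2\pmod 4$ holds automatically and does not depend on the number of odd prime factors, so case~(a) always applies here; your stronger bound sidesteps this anyway.)
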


\begin{proof} By proposition~\ref{burgisser}, it is enough to show that $\beta= \displaystyle \sum_{2\neq p\leq x}(p-1)\leq 2g$. Using lemma~\ref{dusart application} and lemma~\ref{estimate for pi(x)} , we have\\

\begin{align*}\beta  < \sum_{p\leq x}p  & < \frac{1}{2}(\sqrt{g\log(g)})\pi(\sqrt{g\log(g)})\\
& < \frac{3}{2}\frac{g\log(g)}{\log(g\log(g))} = \frac{3}{2}g.\\
\end{align*}


\end{proof}

For $g\geq K$, let $A(g)=\{p\in \mathbb{N}\mid p\leq \sqrt{g\log(g)}\}$ and $m=m(g)$ be as above. If $d$ is any divisor of $m$, then it is easy to see that $d\in S(g)$. Also it is clear that the divisors $d$ of $m$ are in bijection with the number of subsets of $A(g)$. Since any divisor $d$ of $m$ is an element in $S(g)$ and the number of divisors correspond bijectively with subsets of $A(g)$, it follows that $f(g)=|S(g)|\geq  2^{\pi(\sqrt{g\log(g)})}$ (since number of subsets of $A(g)=2^{\pi(\sqrt{g\log(g)})}$). \\

We will now show that $|S(g)| > e^{\frac{1}{4}\sqrt{\frac{g}{\log(g)}}}$ from which it follows that the function $f(g)=|S(g)|$ has at least exponential growth. 

\begin{theorem} Let $L \in \mathbb{N}$ such that $\sqrt{L\log L}\geq 55$. Then $f(g)=|S(g)|> e^{\frac{1}{4}\sqrt{\frac{g}{\log(g)}}}$ for all $g\geq L$.
\end{theorem}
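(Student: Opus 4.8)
The plan is to feed the lower bound $f(g)=|S(g)|\geq 2^{\pi(\sqrt{g\log g})}$, already established in the discussion preceding this theorem, into Rosser's lower bound for the prime counting function (Proposition~\ref{Rosser}). Since $2^{t}=e^{t\log 2}$, it suffices to prove
\[
\pi\bigl(\sqrt{g\log g}\bigr)\;>\;\frac{1}{4\log 2}\sqrt{\frac{g}{\log g}}\qquad\text{for all }g\geq L,
\]
after which the claimed inequality $f(g)>e^{\frac14\sqrt{g/\log g}}$ follows at once. So the whole argument reduces to a clean lower estimate for $\pi$ evaluated at the point $x=\sqrt{g\log g}$.

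First I would record that the hypothesis $\sqrt{L\log L}\geq 55$, together with the monotonicity of $g\mapsto\sqrt{g\log g}$, forces $x=\sqrt{g\log g}\geq 55$ for every $g\geq L$, so Proposition~\ref{Rosser} is applicable and yields
\[
\pi\bigl(\sqrt{g\log g}\bigr)\;>\;\frac{\sqrt{g\log g}}{\log\sqrt{g\log g}+2}\;=\;\frac{2\sqrt{g\log g}}{\log(g\log g)+4}.
\]
Next I would bound the denominator by a constant multiple of $\log g$: writing $\log(g\log g)+4=\log g+\log\log g+4$ and using that $\log\log g+4\leq 2\log g$ once $g$ is moderately large (this holds comfortably for $g\geq L$, since $\sqrt{L\log L}\geq 55$ already forces $L$ into the several hundreds), one obtains $\log(g\log g)+4\leq 3\log g$. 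Substituting gives
\[
\pi\bigl(\sqrt{g\log g}\bigr)\;>\;\frac{2\sqrt{g\log g}}{3\log g}\;=\;\frac{2}{3}\sqrt{\frac{g}{\log g}}.
\]

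Finally I would conclude: $f(g)\geq 2^{\pi(\sqrt{g\log g})}>2^{\frac{2}{3}\sqrt{g/\log g}}=e^{\frac{2\log 2}{3}\sqrt{g/\log g}}$, and since $\frac{2\log 2}{3}=0.4620\ldots>\frac14$, this gives $f(g)>e^{\frac14\sqrt{g/\log g}}$ for all $g\geq L$, as desired. There is no genuine obstacle here — the only point requiring a little care is pinning down an admissible constant $C$ in the elementary bound $\log g+\log\log g+4\leq C\log g$ valid for all $g\geq L$, and then checking the numerical inequality $\frac{2\log 2}{C}>\frac14$ (equivalently $C<8\log 2\approx 5.55$); any $C$ with $1+\tfrac{\log\log L+4}{\log L}\leq C\leq 5.5$ works, and $C=3$ is a convenient safe choice given how large $L$ is.
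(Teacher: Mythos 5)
Your proposal is correct and follows essentially the same route as the paper: both start from the previously established bound $f(g)\geq 2^{\pi(\sqrt{g\log g})}$, apply Rosser's estimate (Proposition~\ref{Rosser}) at $x=\sqrt{g\log g}\geq 55$, and then absorb the lower-order terms in the denominator into a constant multiple of $\log g$ before comparing exponential constants. The only difference is bookkeeping: the paper bounds $\log x+2\leq 2\log x$ to get the exponent $\tfrac12\sqrt{g/\log g}$ and checks $\tfrac{\log 2}{2}>\tfrac14$, while you bound $\log(g\log g)+4\leq 3\log g$ to get $\tfrac23\sqrt{g/\log g}$ and check $\tfrac{2\log 2}{3}>\tfrac14$.
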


\begin{proof} From proposition~\ref{Rosser}, we have for all $g\geq L$, \\
\[\frac{\sqrt{g\log(g)}}{\log(g\log(g))} < \pi(\sqrt{g\log(g)}).\]
\vspace{0.2cm}\\

\noindent From this it follows that for all $g\geq L$, we have \\

\[f(g) \geq 2^{\pi(\sqrt{g\log(g)})} > 2^{\frac{\sqrt{g\log(g)}}{\log(g\log(g))}} > 2^{\frac{1}{2}\sqrt{\frac{g}{\log(g)}}} > e^{\frac{1}{4}\sqrt{\frac{g}{\log(g)}}}.
\]

\end{proof}

%
%
%
%

\begin{corollary} Let $L \in \mathbb{N}$ be as in the above theorem. Then $h(g) > e^{\frac{1}{4}\sqrt{\frac{g}{\log(g)}}}$ for all $g\geq L$.
\end{corollary}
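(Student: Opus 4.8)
The plan is to deduce this immediately from the preceding theorem together with the trivial relation $f(g)\le h(g)$ already recorded in Corollary~\ref{bound}. The point is that $S(g)$ is a set of positive integers, every one of which is at most $h(g)=\max\{m\mid m\in S(g)\}$; hence $S(g)\subseteq\{1,2,\dots,h(g)\}$ and therefore $f(g)=|S(g)|\le h(g)$. So no new estimate is needed: whatever lower bound we have proved for $f(g)$ is automatically a lower bound for $h(g)$.

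Concretely, I would take $L\in\mathbb{N}$ with $\sqrt{L\log L}\ge 55$ exactly as in the theorem, and for $g\ge L$ simply chain the two inequalities:
\[
h(g)\;\ge\; f(g)\;=\;|S(g)|\;>\;e^{\frac{1}{4}\sqrt{\frac{g}{\log(g)}}},
\]
where the first inequality is the containment observation above (equivalently Corollary~\ref{bound}) and the strict inequality is the conclusion of the previous theorem. This gives the claim for all $g\ge L$.

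There is essentially no obstacle here; the only thing to be careful about is that the constant $L$ and the range $g\ge L$ must match those in the theorem whose bound on $f(g)$ we are invoking, so that the cited inequality is valid on the whole range asserted in the corollary. One could also remark, though it is not needed, that since $1\notin S(g)$ one in fact has the slightly stronger $h(g)\ge f(g)+1$; but the stated bound follows already from $h(g)\ge f(g)$.
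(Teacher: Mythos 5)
Your proposal is correct and is exactly the paper's argument: the paper's proof is the one-line observation that $h(g)\ge f(g)$ combined with the preceding theorem. Your justification of $h(g)\ge f(g)$ via $S(g)\subseteq\{1,\dots,h(g)\}$ is the standard reason behind Corollary~\ref{bound}, so nothing differs in substance.
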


\begin{proof}
Since $h(g)\geq f(g)$, the result follows.
\end{proof}

\begin{remark} For $g\log g\geq (599)^{2} $, we can improve the above lower bound $e^{\frac{1}{4}\sqrt{\frac{g}{\log g}}}$ to $e^{\sqrt{\frac{g}{4\log g}}}$ by using proposition~\ref{Dusart2}.
\end{remark}

\bibliographystyle{amsplain}
\bibliography{ref}
%
%
%

\end{document}